\renewcommand\eqref[1]{(\ref{#1})} 
\newcommand{\R}{\mathbb{R}}
\title[Anomaly of the fractional heat propagator]{Anomaly of the fractional heat propagator in abstract settings}
\author[J.E. Restrepo]{Joel E. Restrepo}
\address{\textcolor[rgb]{0.00,0.00,0.84}{Joel E. Restrepo \newline Department of Mathematics \newline CINVESTAV, Mexico City, Mexico}}
\email{\textcolor[rgb]{0.00,0.00,0.84}{joel.restrepo@cinvestav.mx}}
\subjclass[2010]{43A15, 45K05, 35B40.}
\keywords{Banach spaces, heat type equations, asymptotic estimates, non-local operators, linear closed operators, locally compact groups.}
\newtheoremstyle{theorem}
{10pt}          
{10pt}  
{\sl}  
{\parindent}     
{\bf}  
{. }    
{ }    
{}     
\theoremstyle{theorem}
\newtheorem{theorem}{Theorem}
\numberwithin{equation}{section}
\theoremstyle{plain}
\newtheorem{thm}{Theorem}[section]
\theoremstyle{definition}
\newtheorem{defn}[thm]{Definition}
\newtheoremstyle{defi}
{10pt}          
{10pt}  
{\rm}  
{\parindent}     
{\bf}  
{. }    
{ }    
{}     
\theoremstyle{defi}
\newtheorem{remark}[theorem]{Remark}
\begin{document}
 	\begin{abstract}
We study the following time-fractional heat equation: 
\begin{equation*}
^{C}\partial_{t}^{\alpha}u(t)+\mathscr{L}u(t)=0,\quad u(0)=u_0\in X, \quad t\in[0,T],\quad T>0,\quad 0<\alpha<1, 
\end{equation*}
where $^{C}\partial_{t}^{\alpha}$ is the Djrbashian-Caputo fractional derivative, $X$ is a complex Banach space and $\mathscr{L}:\mathcal{D}(\mathscr{L})\subset X\to X$ is a closed linear operator. The solution operator of the equation above is given by the strongly continuous operator $E_\alpha(-t^{\alpha}\mathscr{L})$ for any $t\geqslant0$, closely related with the Mittag-Leffler function $E_\alpha(-x)$ for $x\geqslant0.$ There are different ways to present explicitly this operator and one of the most popular is given in terms of the $C_0$-semigroup generated by $-\mathscr{L}$ $\big(\{e^{-t\mathscr{L}}\}_{t\geqslant0}\big)$ as follows: 
\[
E_\alpha(-t^{\alpha}\mathscr{L})=\int_0^{+\infty}M_{\alpha}(s)e^{-st^{\alpha}\mathscr{L}}{\rm d}s,\quad t\geqslant0,    
\]
where $M_{\alpha}(s)$ is a Wright-type function. We will see that the latter expression is not always optimal (regarding restrictions: endpoint lost) to estimate different norms. An additional restriction appears while bounding the above integral, which can be avoided by using directly the function itself and its well-known uniform bound $|E_{\alpha}(-x)|\leqslant \frac{C}{1+x},$ $x\geqslant0.$       
\end{abstract}
	\maketitle
	\tableofcontents

\section{Introduction}

In the study of fractional differential equations in Banach spaces \cite{thesis2001,thesis,pruss}, it may be noted that many authors in the literature (see \cite{navier,chen,[62],section3,[79]} and the references therein) have strategically rewritten fractional heat propagators in terms of classical heat ones, since estimates for the latter are already well-known in many cases. This issue can also be seen in the study of some subordination principles. We proceed to describe more clearly this idea as follows. First of all, we consider the following time-fractional heat equation:
\begin{equation}\label{proto}
^{C}\partial_{t}^{\alpha}u(t)+\mathscr{L}u(t)=0,\quad u(0)=u_0\in X,\quad t\in[0,T],\quad T>0,\quad 0<\alpha<1, 
\end{equation}
where the Djrbashian-Caputo fractional derivative is defined as 
\[^{C}\partial_{t}^{\alpha}u(t)=\int_0^t \frac{(t-s)^{-\alpha}}{\Gamma(1-\alpha)}\partial_{s}u(s){\rm d}s,\] 
$X$ is a Banach space over $\mathbb{C}$ and $\mathscr{L}:\mathcal{D}(\mathscr{L})\subset X\to X$ is a closed  linear operator which is densely defined. The assumption on the density can be dropped to develop the idea of this paper. In this case, it is just necessary to follow the ideas from \cite{JEE2002,section3} for the so-called almost sectorial operators.  

The solution operator (propagator) of equation \eqref{proto} is associated with the Mittag-Leffler function:
\begin{equation}\label{mittag}
E_{\alpha}(z)=\sum_{k=0}^{+\infty} \frac{z^k}{\Gamma(\alpha k+1)},\quad z\in\mathbb{C},\quad \Re(\alpha)>0,
\end{equation}
which is absolutely and locally uniformly convergent for the given parameters \cite{mittag}. Note that the integral representation of the Mittag-Leffler function is given by   
\begin{equation}\label{integral-mittag}
E_{\alpha}(z)=\frac{1}{2\pi i}\int_{\mathfrak{H}}e^{\gamma}\gamma^{\alpha-1}(\gamma^{\alpha}-z)^{-1}{\rm d}\gamma,
\end{equation}
where $\mathfrak{H}$ is a suitable Hankel path \cite[Formula (3.4.12)]{mittag}.

From now on we focus on working with positive sectorial operators \cite{functionalcalculus}, i.e. those operators for which there exist $N\geqslant1$ and $\phi\in(0,\pi/2)$ such that 
\[
S_{\phi}:=\{\lambda\in\mathbb{C}:\phi\leqslant|\text{arg}(\lambda)|\leqslant\pi\}\subset\rho(\mathscr{L})\footnote{The resolvent set of $\mathscr{L}$.}
\]
and 
\[
\|(\lambda-\mathscr{L})^{-1}\|_{\mathcal{L}(X)}\leqslant \frac{N}{|\lambda|},\quad \lambda\in S_{\phi}\setminus\{0\},
\]
where $\mathcal{L}(X)$ denotes the space of all bounded linear operators from $X$ to $X$, and its norm is given by $\|\cdot\|_{\mathcal{L}(X)}.$
Thus, for the above type of operators, it was proved in \cite[Theorem 2.41]{thesis} (see also  \cite[Lemma 2.1]{tmn}) that the global mild solution of \eqref{proto} is given by the well defined strongly continuous operator 
\begin{equation}\label{heatf}
E_\alpha(-t^{\alpha}\mathscr{L}):=\frac{1}{2\pi i}\int_{H}e^{st}s^{\alpha-1}(s^{\alpha}+\mathscr{L})^{-1}{\rm d}s,\quad t\geqslant0,    
\end{equation}
where $H$ is the Hankel path of \cite[Formula (2.5)]{thesis} and it is contained in the resolvent set $\rho(-\mathscr{L})$. Moreover 
\[
\sup_{t\geqslant0}\|E_\alpha(-t^{\alpha}\mathscr{L})\|_{\mathcal{L}(X)}\leqslant M,\quad\text{for some constant $M>0$ (uniform on $\alpha$).}
\]
Representation \eqref{heatf} and its properties are initially inspired by the early works of Bajlekova, see e.g. \cite[Chapters 2 and 3]{thesis2001}. These ideas came from a more general theory of evolutionary integral equations \cite[Chapter 2]{pruss}. We highlight that the propagator \eqref{heatf} can be also defined by using the functional calculus \cite{BorelFunctional}.

We point out that the notation in \eqref{heatf} is consistent (convenient) since it coincides with the classical integral representation of the Mittag-Leffler function if by abuse of notation we replace the operator $\mathscr{L}$ by a real number (see formula \eqref{integral-mittag}). The propagator in \eqref{heatf} can also be rewritten as follows \cite[Theorem 3.1]{thesis2001} (see \cite[Theorem 2.42]{thesis}, \cite[Lemma 9]{[62]} or \cite{[79]})
\begin{equation}\label{heatfa}
E_\alpha(-t^{\alpha}\mathscr{L})=\int_0^{+\infty}M_{\alpha}(s)e^{-st^{\alpha}\mathscr{L}}{\rm d}s,\quad t\geqslant0,    
\end{equation}
where $\{e^{-t\mathscr{L}}\}_{t\geqslant0}$ is the $C_0-$ semigroup generated by $-\mathscr{L},$ and
\begin{equation}\label{wright}
M_{\alpha}(z)=\sum_{n=0}^{+\infty}\frac{(-z)^n}{n!\Gamma(-\alpha n+1-\alpha)},\quad z\in\mathbb{C},\quad 0\leqslant\alpha<1,
\end{equation}
is the Wright-type function which is convergent in the whole $z$-complex plane \cite{1940}. Some of the basic properties of this function are:
\[
M_\alpha(x)\geqslant 0\quad\text{for all}\quad x\in(0,+\infty),\quad \int_0^{+\infty}M_\alpha(s){\rm d}s=1,
\]
and 
\begin{equation}\label{estimate}
\int_0^{+\infty}s^{\gamma}M_{\alpha}(s){\rm d}s=\frac{\Gamma(\gamma+1)}{\Gamma(\gamma\alpha+1)},\quad \gamma>-1,\quad 0\leqslant \alpha<1.
\end{equation}

It is important to emphasise that, in the limit case $\alpha\to1^{-}$, the function $M_{\alpha}(x)$, for $x\in\mathbb{R}^+$, tends to the Dirac generalised function $\delta(x-1).$ The function in \eqref{wright} has also been called the Mainardi function \cite{1993}, however Mainardi recognised later that he was not aware of Wright's 1940 work \cite{1940} and thus he thought that it was a new function although it was not. Nevertheless, it is necessary to mention that Mainardi demonstrated through many years the great value of this function in studying fractional differential equations and its applications, as detailed in his monograph \cite{Mainardi}. It is worth noting that the following unidimensional expression of \eqref{heatfa}:
\[
E_\alpha(-x)=\int_0^{+\infty}M_{\alpha}(s)e^{-sx}{\rm d}s,\quad x\geqslant0,
\] 
is consistent with the classical result given by Pollard in 1948 \cite{Pollard}, who also showed that the Mittag-Leffler function $E_\alpha(-x)$ is completely monotonic.

At this stage, many researchers prefer to use the expression \eqref{heatfa} in their works since it depends on the $C_0-$ semigroup. Therefore, estimates for the fractional heat propagator will follow from the estimates of the classical heat propagator which in several cases are well-known and even sharp. Moreover, it is easy to handle and estimates reduce to take care of the convergence of a real value integral. Nevertheless, we will show that expression \eqref{heatfa} is not always optimal to estimate some norms since stronger restrictions are needed (endpoint lost). In the next section, we focus on illustrating this phenomenon in the setting of locally compact groups. In fact, we provide the main result of this paper on fractional heat equations. We estimate the time-decay rate for the considered equations by using two different representations of the fractional heat propagator. Finally, we contrast both results and show their advantages and disadvantages.   

\section{Fractional heat equations and their time-decay estimates}\label{mainresult}

We are now ready to discuss the mentioned problem (see the introduction) in the context of locally compact groups. In this scenario, we will show explicitly the disadvantage in some estimates of using the representation \eqref{heatfa} for the fractional heat propagator. Thus, we first recall some results on the classical heat equation and its time-decay rate. After that, we continue by its fractional counterpart. In the end, we compare all the results.  

\subsection{Heat equation and its time-decay rate} 

As an application of some spectral multipliers results on a locally compact separable unimodular group $G$, it was shown recently in \cite{RR2020} that the  $L^p-L^q$ $(1<p\leqslant 2\leqslant q<+\infty)$ norm estimates for the solution operator of the following $\mathscr{L}$-heat equation 
\begin{align}\label{asterisco}
\begin{split}
\partial_{t}w(t,x)+\mathscr{L}w(t,x)&=0, \quad t>0,\,\, x\in G, \\
w(t,x)|_{_{_{t=0}}}&=w_0(x),
\end{split}
\end{align}
can be reduced to the time asymptotics of its propagator \cite[Theorems 5.1 and 6.1]{RR2020} in the noncommutative Lorentz space norm \cite{[51]}, which involves calculating the trace of the spectral projections of the operator $\mathscr{L}$. Indeed, the following estimate (time-decay rate) was obtained for the solution $w(t,x)=e^{-t\mathscr{L}}w_0(x)$ of equation \eqref{asterisco}:
\begin{equation}\label{asym-heat}
\|e^{-t\mathscr{L}}w_0\|_{L^q(G)}\leqslant C_{\lambda,p,q}t^{-\lambda\left(\frac{1}{p}-\frac{1}{q}\right)}\|w_0\|_{L^p(G)},\quad t>0,
\end{equation}
whenever
\begin{equation}\label{trace}
\tau\big(E_{(0,s)}(\mathscr{L})\big)\lesssim s^{\lambda},\quad s\to+\infty,\quad\text{for some}\quad \lambda>0.
\end{equation}
The symbol $\tau$ denotes the trace over the spectral projections $E_{(0,s)}(\mathscr{L})$ of the operator $\mathscr{L}$ in the interval $(0,s).$ This trace appears frequently in the construction of the Fourier multipliers on locally compact groups \cite{RR2020}. In fact, it is considered $M\subset\mathfrak{L}(\mathcal{H})$ to be a semifinite von Neumann algebra acting over the Hilbert space $\mathcal{H}$ with a trace $\tau,$ and $\mathfrak{L}(\mathcal{H})$ stands for the set of linear operators defined on $\mathcal{H}$. More details about this trace and its link with the von Neumann algebras can be found in \cite{von,[46],[47]}.

Let us recall that $L^p-L^q$ boundedness of the solution of \eqref{asterisco} is conditioned by the assumption:
\[
\sup_{t>0}\sup_{s>0}\big[\tau\big(E_{(0,s)}(\mathscr{L})\big)\big]^{\frac{1}{p}-\frac{1}{q}}e^{-ts}<+\infty.
\]
The considered operator (unbounded) can be any positive (nonnegative and self-adjoint) linear left invariant operator acting on $G$ with the possibility (generality) of having either continuous or discrete spectrum. We provide some examples (with different groups) where the trace of an operator has such a behaviour as in \eqref{trace}. 

\begin{itemize}
\item[1.] In the Euclidean space $\R^n$, we consider the Laplacian $\Delta_{\R^n}$ on $\R^n$.  It was proved in \cite[Example 7.3]{RR2020} that
\[
\tau\big(E_{(0,s)}(\Delta_{\R^n})\big)\lesssim s^{n/2},\quad s\to+\infty.
\]
    \item[2.] The sub-Laplacian $\Delta_{sub}$ on a compact Lie group. In \cite{[35]}, it can be found that
\[
\tau\big(E_{(0,s)}(-\Delta_{sub})\big)\lesssim s^{Q/2},\quad s\to+\infty,
\]
where $Q$ is the Hausdorff dimension of $G.$ 

\item[3.] The positive sub-Laplacian $\mathscr{L}$ on the Heisenberg group $\mathbb{H}^n$. By \cite[Formula (7.17)]{RR2020}, we know that
\[
\tau\big(E_{(0,s)}(\mathscr{L})\big)\lesssim s^{n+1},\quad s\to+\infty.
\]

\item[4.] A positive Rockland operator $\mathcal{R}$ of order $\nu$ on a graded Lie group. By \cite[Theorem 8.2]{david}, we get that
\[
\tau\big(E_{(0,s)}(\mathcal{R})\big)\lesssim s^{Q/\nu},\quad s\to+\infty,
\]
where $Q$ is the homogeneous dimension of $G$. 
\end{itemize}

Some other more technical examples can be found e.g. in \cite[Examples  2.2 and 3.2]{Marianna} or \cite[Proposition 0.3]{david2}. So, we have shown several operators over different groups satisfying condition \eqref{trace}. Here the type of groups that we can consider is very large. For instance, compact, semi simple, exponential, nilpotent, some solvable ones, real algebraic, and many more.

\medskip\noindent Below we treat the time-fractional heat equation and recall how its solution operator can be associated with the classical heat propagator. 

\subsection{Time-fractional heat equation and its time-decay rate} In the recent papers \cite{SRR-1,SRR-2}, it was studied the following time-fractional $\mathscr{L}$-heat equation:
\begin{align}\label{heat-f}
\begin{split}
^{C}\partial_{t}^{\alpha}w(t,x)+\mathscr{L}w(t,x)&=0,\quad t>0,\,\, x\in G,  \\
w(t,x)|_{_{_{t=0}}}&=w_0(x),\quad w_0\in L^p(G), 
\end{split}
\end{align}
where $0<\alpha\leqslant1,$ $1<p\leqslant 2$ and $\mathscr{L}$ is any positive linear left invariant operator on $G$ (a locally compact separable unimodular group). Notice that the integro-differential operator in the left hand side of equation \eqref{heat-f} can be also rewritten by using the Riemann-Liouville integral \cite{samko} as $\prescript{RL}{0}I^{1-\alpha}\partial_t w(t,x)$. This implies that for $\alpha=1$, we get $\partial_t w(t,x)$ since the operator $\prescript{RL}{0}I^{0}=\textrm{Id}$ acts as the identity. We exclude the value $\alpha=1$ from consideration since then problem \eqref{heat-f} is the same as \eqref{asterisco} and the result in this case is known \cite{RR2020}. The $L^p-L^q$ $(1<p\leqslant 2\leqslant q<+\infty)$ norm estimates for the solution of problem \eqref{heat-f} were obtained in \cite[Theorem 5]{SRR-1} (see also \cite[Theorem 3.3]{SRR-2}). The result reads as follows:
\begin{thm}\label{heat-f-thm}
Let $G$ be a locally compact separable unimodular group and let $1<p\leqslant 2\leqslant q<+\infty$. Let $\mathscr{L}$ be any positive left invariant operator on $G$ (unbounded) such that 
\begin{equation}\label{cond-1}
\sup_{t>0}\sup_{s>0}\big[\tau\big(E_{(0,s)}(\mathscr{L})\big)\big]^{\frac{1}{p}-\frac{1}{q}}E_\alpha(-t^{\alpha}s)<+\infty.
\end{equation}
If $0<\alpha<1$ and $w_0\in L^p(G)$ then the solution for the time-fractional $\mathscr{L}$-heat equation \eqref{heat-f} is given by 
\[
w(t,x)=E_\alpha(-t^{\alpha}\mathscr{L})w_0(x),\quad t>0,\,\,x\in G,
\]
and also it is in $L^q(G).$ In particular, if the condition \eqref{trace} holds, we obtain the following time decay rate for the solutions of \eqref{heat-f}:  
\[
\|w(t,\cdot)\|_{L^q(G)}\leqslant C_{\alpha,\lambda,p,q}t^{-\alpha\lambda\left(\frac{1}{p}-\frac{1}{q}\right)}\|w_0\|_{L^p(G)},\quad\text{whenever}\quad \frac{1}{\lambda}\geqslant\frac{1}{p}-\frac{1}{q}.
\]
\end{thm}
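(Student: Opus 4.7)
The plan is to prove the $L^p$--$L^q$ estimate by applying the spectral-multiplier machinery of \cite{RR2020} \emph{directly} to the symbol $\phi_t(s):=E_\alpha(-t^\alpha s)$, deliberately avoiding the subordination formula \eqref{heatfa}. This is precisely the route that retains the endpoint $1/\lambda=1/p-1/q$.

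First, since $\mathscr{L}$ is positive and left-invariant, it is affiliated with the group von Neumann algebra carrying the trace $\tau$, and the abstract boundedness criterion behind \eqref{asym-heat} (the same one giving \cite[Theorems 5.1 and 6.1]{RR2020}) asserts that for any bounded Borel $\phi$,
\[
\|\phi(\mathscr{L})\|_{L^p(G)\to L^q(G)}\;\lesssim\;\sup_{s>0}\bigl[\tau\bigl(E_{(0,s)}(\mathscr{L})\bigr)\bigr]^{\frac{1}{p}-\frac{1}{q}}|\phi(s)|.
\]
Specializing to $\phi=\phi_t$, the right-hand side is uniformly controlled by \eqref{cond-1}; since by \eqref{heatf} the mild solution of \eqref{heat-f} is $w(t,\cdot)=E_\alpha(-t^\alpha\mathscr{L})w_0$, this already yields $w(t,\cdot)\in L^q(G)$ together with the qualitative bound.

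Next, to obtain the explicit decay under \eqref{trace}, I would feed both ingredients — the polynomial trace growth $\tau(E_{(0,s)}(\mathscr{L}))\lesssim s^\lambda$ and the uniform estimate $|E_\alpha(-x)|\leq C/(1+x)$ emphasized in the abstract — into the supremum above:
\[
\|E_\alpha(-t^\alpha\mathscr{L})\|_{L^p\to L^q}\;\lesssim\;\sup_{s>0}\frac{s^{\lambda(1/p-1/q)}}{1+t^\alpha s}.
\]
Writing $\beta:=\lambda(1/p-1/q)$, a one-variable calculus exercise gives $\sup_{s>0} s^\beta/(1+t^\alpha s)=C_\beta\,t^{-\alpha\beta}$ exactly when $0<\beta\leq 1$, i.e.\ $1/\lambda\geq 1/p-1/q$, including the endpoint $\beta=1$ (where the supremum is attained in the limit $s\to\infty$). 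Inserting this into the previous display produces the claimed time-decay rate $t^{-\alpha\lambda(1/p-1/q)}$.

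The main obstacle, and the real conceptual content of the theorem, is justifying why one must use the pointwise bound on the Mittag-Leffler symbol rather than \eqref{heatfa}. If instead one writes $E_\alpha(-t^\alpha\mathscr{L})w_0=\int_0^{+\infty}M_\alpha(s)e^{-st^\alpha\mathscr{L}}w_0\,{\rm d}s$, bounds the inner semigroup by \eqref{asym-heat} to get $(st^\alpha)^{-\beta}$, and applies \eqref{estimate}, then the resulting Wright-moment $\int_0^{+\infty}M_\alpha(s)s^{-\beta}\,{\rm d}s$ is finite only for $\beta<1$ since \eqref{estimate} requires $\gamma>-1$. Thus the subordination route forces the strict inequality $1/\lambda>1/p-1/q$, losing the endpoint; the direct spectral-multiplier route recovers it. This contrast is exactly the "anomaly" announced in the abstract and should be stated explicitly as the reason for the choice of method.
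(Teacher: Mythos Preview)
Your proposal is correct and follows essentially the same route the paper attributes to the original proof (in \cite[Theorem 5]{SRR-1} and \cite[Theorem 3.3]{SRR-2}): apply the Fourier-multiplier bound from \cite{RR2020} directly to the symbol $s\mapsto E_\alpha(-t^\alpha s)$, then combine the trace growth \eqref{trace} with the uniform estimate \eqref{uniform-estimate} and optimize $\sup_{s>0}s^{\beta}/(1+t^\alpha s)$ for $\beta=\lambda(1/p-1/q)\in(0,1]$. The paper itself does not give a self-contained proof of Theorem~\ref{heat-f-thm}, but the Remark following it and the computations in the Conclusion section reproduce exactly your two ingredients and your supremum calculation, including the endpoint case $\beta=1$ handled as a limit; your closing paragraph explaining why the subordination formula \eqref{heatfa} loses that endpoint is precisely the content of Theorem~\ref{thm-f-heat} and the paper's concluding discussion.
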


Let us recall that the solution operator $\{E_\alpha(-t^{\alpha}\mathscr{L})\}_{t\geqslant0}\subset \mathcal{L}(L^p(G))$ satisfies the following properties:
\begin{enumerate}
    \item $E_\alpha(-t^{\alpha}\mathscr{L})$ is strongly continuous for $t\geqslant0$ and $E_\alpha(0)=I;$
    \item $E_\alpha(-t^{\alpha}\mathscr{L})\mathcal{D}(\mathscr{L})\subset\mathcal{D}(\mathscr{L})$ and $\mathscr{L}E_\alpha(-t^{\alpha}\mathscr{L})w=E_\alpha(-t^{\alpha}\mathscr{L})\mathscr{L}w$ for any $w\in\mathcal{D}(\mathscr{L})$, $t\geqslant0;$
    \item $E_\alpha(-t^{\alpha}\mathscr{L})w$ is a solution of equation \eqref{heat-f} for any $w\in\mathcal{D}(\mathscr{L})$, $t\geqslant0.$
\end{enumerate}

\begin{remark}
The restriction $\frac{1}{\lambda}\geqslant\frac{1}{p}-\frac{1}{q}$ found in Theorem \ref{heat-f-thm} will play an important role in the development of this short note. For the time decay rate obtained in the latter theorem was used the inequality \cite[Theorem 1.6]{page 35}: 
\begin{equation}\label{uniform-estimate}
|E_{\alpha}(-x)|\leqslant \frac{C}{1+x},\quad x\geqslant0,\quad \alpha<2,\quad\text{where $C$ is a positive constant.}
\end{equation}
\end{remark}

\subsection{Alternative approach to study the time-fractional heat equation} It was discussed in the introduction that the solution operator of the time-fractional heat equation \eqref{heat-f} in a complex Banach space can be given by the relation \eqref{heatfa} which involves the heat propagator. Hence, if we consider the space $L^p(G)$ $(1<p\leqslant2)$ where $G$ is a locally compact group, we can then establish the following result.

\begin{thm}\label{thm-f-heat}
Let $G$ be a locally compact separable unimodular group and let $1<p\leqslant 2\leqslant q<+\infty$. Let $\mathscr{L}$ be any positive left invariant operator on $G$ (unbounded) such that 
\begin{equation}\label{cond-2}
\sup_{t>0}\sup_{s>0}\big[\tau\big(E_{(0,s)}(\mathscr{L})\big)\big]^{\frac{1}{p}-\frac{1}{q}}e^{-ts}<+\infty.
\end{equation}
If $0<\alpha<1$ and $w_0\in L^p(G)$ then the solution for the time-fractional $\mathscr{L}$-heat equation \eqref{heat-f} is given by 
\[
w(t,x)=E_\alpha(-t^{\alpha}\mathscr{L})w_0(x)=\int_0^{+\infty}M_{\alpha}(s)e^{-st^{\alpha}\mathscr{L}}w_0(x)\,{\rm d}s,\quad t>0,\,\,x\in G,
\]
and also it is in $L^q(G).$ In particular, if the condition \eqref{trace} holds, we get the following time decay rate for the solutions of \eqref{heat-f}:  
\[
\|w(t,\cdot)\|_{L^q(G)}\leqslant C_{\alpha,\lambda,p,q}t^{-\alpha\lambda\left(\frac{1}{p}-\frac{1}{q}\right)}\|w_0\|_{L^p(G)},\quad\text{whenever}\quad \frac{1}{\lambda}>\frac{1}{p}-\frac{1}{q}.
\]
\end{thm}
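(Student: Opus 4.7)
The plan is to reduce the $L^p$–$L^q$ bound for the fractional propagator to the known $L^p$–$L^q$ bound for the classical heat propagator via the subordination formula \eqref{heatfa}, and then absorb the loss into an integral against the Wright function $M_\alpha$. First I would invoke the existing result \cite[Theorems 5.1 and 6.1]{RR2020}: condition \eqref{cond-2} is exactly the hypothesis guaranteeing that $e^{-r\mathscr{L}}$ is bounded from $L^p(G)$ to $L^q(G)$, and combining this with the trace growth \eqref{trace} yields the time-decay estimate
\[
\|e^{-r\mathscr{L}}w_0\|_{L^q(G)}\leqslant C_{\lambda,p,q}\,r^{-\lambda(\frac{1}{p}-\frac{1}{q})}\|w_0\|_{L^p(G)},\quad r>0.
\]

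Next, starting from the representation $w(t,x)=\int_0^{+\infty}M_\alpha(s)e^{-st^\alpha\mathscr{L}}w_0(x)\,{\rm d}s$, I would apply Minkowski's integral inequality (using that $M_\alpha(s)\geqslant0$) and substitute the classical estimate with $r=st^\alpha$:
\[
\|w(t,\cdot)\|_{L^q(G)}\leqslant \int_0^{+\infty}M_\alpha(s)\|e^{-st^\alpha\mathscr{L}}w_0\|_{L^q(G)}\,{\rm d}s \leqslant C_{\lambda,p,q}\,t^{-\alpha\lambda(\frac{1}{p}-\frac{1}{q})}\|w_0\|_{L^p(G)}\int_0^{+\infty}s^{-\lambda(\frac{1}{p}-\frac{1}{q})}M_\alpha(s)\,{\rm d}s.
\]
The remaining integral is evaluated via the Wright function moment identity \eqref{estimate}, taking $\gamma=-\lambda\bigl(\tfrac{1}{p}-\tfrac{1}{q}\bigr)$, which yields a finite constant $\Gamma(\gamma+1)/\Gamma(\gamma\alpha+1)$ depending on $\alpha,\lambda,p,q$, and absorbs cleanly into $C_{\alpha,\lambda,p,q}$.

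The only delicate point—and the reason the conclusion degrades to a strict inequality—is the convergence condition $\gamma>-1$ required in \eqref{estimate}. This translates precisely to $\lambda\bigl(\tfrac{1}{p}-\tfrac{1}{q}\bigr)<1$, i.e.\ $\tfrac{1}{\lambda}>\tfrac{1}{p}-\tfrac{1}{q}$, whereas the direct approach of Theorem \ref{heat-f-thm} uses \eqref{uniform-estimate} on $E_\alpha(-x)$ and needs only $\tfrac{1}{\lambda}\geqslant\tfrac{1}{p}-\tfrac{1}{q}$. This is the ``endpoint loss'' announced in the abstract: the singularity of $s^\gamma$ at the origin against $M_\alpha(s)$ is unavoidable because $M_\alpha(0^+)=1/\Gamma(1-\alpha)>0$ is strictly positive. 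I expect this to be the main conceptual obstacle to match, rather than surmount—the proof itself is then a straightforward Minkowski-plus-moment computation, while the justification that the representation \eqref{heatfa} and properties (1)–(3) of the solution operator transfer to the $L^p(G)$ setting follows from the general Banach space framework recalled in the introduction.
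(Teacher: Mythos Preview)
Your proposal is correct and follows essentially the same route as the paper: Minkowski's inequality on the subordination formula \eqref{heatfa}, the classical heat decay \eqref{asym-heat} with $r=st^{\alpha}$, and the moment identity \eqref{estimate} for the Wright function, yielding the strict constraint $\frac{1}{\lambda}>\frac{1}{p}-\frac{1}{q}$. Your added remark that $M_\alpha(0^+)=1/\Gamma(1-\alpha)>0$ explains why the endpoint cannot be recovered by this method is a nice observation not made explicit in the paper's proof.
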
 
\begin{proof}
It is clear that the solution operator of equation \eqref{heat-f} is a bounded operator from $L^p(G)$ to $L^{p}(G),$ and it is given by the representation \eqref{heatfa}. Let us now show that the solution is in $L^q(G)$ whenever the data is in $L^p(G).$ Indeed, suppose that $w_0\in L^p(G).$ Thus, by using the estimate \eqref{asym-heat}, it yields that
\begin{align}
\|E_\alpha(-t^{\alpha}\mathscr{L})w_0\|_{L^q(G)}&\leqslant \int_0^{+\infty}M_{\alpha}(s)\|e^{-st^{\alpha}\mathscr{L}}w_0\|_{L^q(G)}{\rm d}s,\quad t>0, \nonumber\\    
&\leqslant C_{\alpha,\lambda,p,q}\|w_0\|_{L^p(G)}t^{-\alpha\lambda\left(\frac{1}{p}-\frac{1}{q}\right)}\int_0^{+\infty}M_{\alpha}(s)s^{-\lambda\left(\frac{1}{p}-\frac{1}{q}\right)}{\rm d}s, \label{333}
\end{align}
and the last integral is convergent (see estimate \eqref{estimate}) when 
$-\lambda\left(\frac{1}{p}-\frac{1}{q}\right)>-1,$ i.e. $\frac{1}{\lambda}>\frac{1}{p}-\frac{1}{q},$ which finishes the proof. 
\end{proof}

\subsection{Conclusion}

We contrast the results stated in Theorems \ref{heat-f-thm} and \ref{thm-f-heat} on the solution of the time-fractional heat equation \eqref{heat-f}.

Before giving the final remark of this paper, we compare the conditions \eqref{cond-1} and \eqref{cond-2}. First, note that, in general, we can not compare the functions $e^{-t}$ and $E_\alpha(-t)$ for $t\geqslant0,$ see e.g. \cite{555}. Usually, they exceed each other in two different frames. Having in mind the behaviour of the condition \eqref{trace}, we obtain that 
\[
\sup_{s>0}\big[\tau\big(E_{(0,s)}(\mathscr{L})\big)\big]^{\frac{1}{p}-\frac{1}{q}}E_\alpha(-t^{\alpha}s)\leqslant C_{\lambda,p,q} \sup_{s>0}\frac{s^{\lambda\left(\frac{1}{p}-\frac{1}{q}\right)}}{1+t^\alpha s},
\]
due to the uniform estimate \eqref{uniform-estimate}. For ${\frac{1}{p}-\frac{1}{q}}=1/\lambda$, the supremum is bounded by $t^{-\alpha}.$ Also, the above supremum is attained at $s=\frac{\lambda t^{-\alpha}}{{\frac{1}{p}-\frac{1}{q}}-\lambda}$ whenever $\frac{1}{\lambda}>\frac{1}{p}-\frac{1}{q}$, hence
\[
\sup_{s>0}\big[\tau\big(E_{(0,s)}(\mathscr{L})\big)\big]^{\frac{1}{p}-\frac{1}{q}}E_\alpha(-t^{\alpha}s)\leqslant C_{\alpha,\lambda,p,q}t^{-\alpha\lambda\left(\frac{1}{p}-\frac{1}{q}\right)}. 
\]
Therefore, we can see clearly how the constrain appears in Theorem \ref{heat-f-thm}. Also, we can check easily that 
\[
\sup_{s>0}\big[\tau\big(E_{(0,s)}(\mathscr{L})\big)\big]^{\frac{1}{p}-\frac{1}{q}}e^{-ts}\lesssim \sup_{s>0}s^{\lambda\left(\frac{1}{p}-\frac{1}{q}\right)}e^{-ts}=\left(t^{-1}\lambda\left(\frac{1}{p}-\frac{1}{q}\right)\right)^{\lambda\left(\frac{1}{p}-\frac{1}{q}\right)}e^{-\lambda\left(\frac{1}{p}-\frac{1}{q}\right)},
\]
which shows that does not appear any restriction. But then, this generality is lost when it is used representation \eqref{heatfa} of the fractional heat propagator in Theorem \ref{thm-f-heat}. 
 
Hence, in Theorem \ref{thm-f-heat}, it was used the alternative representation \eqref{heatfa} of $E_\alpha(-t^{\alpha} \mathscr{L})$ for the solution of \eqref{heat-f}. In this case, we are losing the possibility of having the equality in the restriction $\frac{1}{\lambda}>\frac{1}{p}-\frac{1}{q}$ which was stated in Theorem \ref{heat-f-thm}. This is mainly happening for the 
constraint found in the convergence of the integral involving the Wright-type function in the proof of Theorem \ref{thm-f-heat}. Note that this latter fact is not affecting the condition $\frac{1}{\lambda}\geqslant\frac{1}{p}-\frac{1}{q}$ in Theorem \ref{heat-f-thm} since there it was possible to get a better constrain. Indeed,  in the proof (see \cite[Theorem 3.3]{SRR-2}) of the boundedness it was possible to separate the contribution of the operator and the function itself by using some contemporary Fourier multiplier results from \cite{RR2020, SRR-2}.  

In one of the most simple cases, we can take $G=\mathbb{R}^n$ and the operator being the Laplacian $\Delta_{\R^n}.$ By Theorem \ref{heat-f-thm}, we have that the decay estimate of problem \eqref{heat-f} coincides with the sharp estimate (time-decay) given in \cite[Theorem 3.3, items (i) and (ii)]{uno2} whenever $\frac{2}{n}\geqslant\frac{1}{p}-\frac{1}{q}$. Now if we use representation \eqref{heatfa} as we have seen in \eqref{333} (i.e. Theorem \ref{thm-f-heat}), we then lose the equality case (endpoint restriction). 

We can then conclude that boundedness results for the propagators of the time-fractional heat equations are far to be optimal regarding restrictions if it is used the explicit representation \eqref{heatfa} of its propagator. This open a more general question on which representations and methods to use for these propagators in abstract settings for getting the most optimal results e.g. in the $L^p-L^p$ or $L^p-L^q$ boundedness of the solution operator of \eqref{heat-f}. Of course, the latter question can be extended to the study of some other properties as well.    


\end{document}